\newtheorem{theorem}{Theorem}[section]
\newtheorem{sublemma}[theorem]{Sublemma}
\newtheorem{definition}[theorem]{Definition}
\newtheorem{corollary}[theorem]{Corollary}
\newtheorem{remark}[theorem]{Remark}
\newtheorem{proposition}[theorem]{Proposition}
\newtheorem*{theorem A}{Theorem A}
\newtheorem*{corollary B}{Corollary B}
\newtheorem*{corollary C}{Corollary C}
\newtheorem*{theorem B}{Theorem B}
\theoremstyle{definition}
\newtheorem{example}[theorem]{Example}
\DeclareMathOperator{\IFS}{IFS}
\begin{document}
\title[Sensitivity of iterated function systems]{Sensitivity of iterated function systems}
\author[Ghane]{F. H. Ghane}
\address{\centerline{Department of Mathematics, Ferdowsi University of Mashhad}
\centerline{Mashhad, Iran.}  }
\email{ghane@math.um.ac.ir}
\email{f\_h\_ghane@yahoo.com}
\author[Rezaali]{E. Rezaali}
\address{\centerline{Department of Mathematics, Ferdowsi University of Mashhad}
\centerline{Mashhad, Iran.}}
\email{esmaeel.rezaali@gmail.com}
\author[Sarizadeh]{A. Sarizadeh$^{*}$}
\address{\centerline{Department of Mathematics, School of Science, Ilam University }
\centerline{P. O. Box: 69315-516, Ilam, Iran.}  }
\email{ali.sarizadeh@gmail.com}
\email{a.sarizadeh@ilam.ac.ir}
 \thanks{$^*$Corresponding author}
 \subjclass[2010] {37B05; 37B10; 54H20; 58F03.}
\keywords{iterated function systems, minimality, transitivity, weak topologically exact, equicontinuity, sensitivity.}
\begin{abstract}
The present work is concerned with the eqiucontinuity and sensitivity of iterated function systems (IFSs).
Here, we consider more general case of IFSs, i.e. the IFSs  generated by a family of relations.
We generalize the concepts of transitivity, sensitivity and equicontinuity to these kinds of systems.
This note investigates the relationships between these concepts.
Then, several sufficient conditions for sensitivity of IFSs are presented.
We introduce the notion of weak topologically exact for IFSs generated by a family of relations.
It is proved that non-minimal weak topologically exact IFSs are sensitive.
That yields to different examples of non-minimal sensitive systems which are not an $M$-system.
Moreover, some interesting examples are given which provide some facts about the sensitive property of IFSs.
\end{abstract}
\maketitle
\thispagestyle{empty}

\section{Introduction}
In deterministic dynamical systems, chaos concludes all random phenomena without any stochastic factors.
On the study of chaos, the concept of sensitivity  is a key ingredient.
In fact, sensitivity characterizes the unpredictability of
chaotic phenomena, and is one of the essential conditions in various definitions of chaos. Therefore, the
study on sensitivity has attracted a lot of attention from many researchers
(e.g. \cite{ABC, AG, ak,bbcds, GW,HS,km,ls}).
In 1971, Ruelle introduced the first precise definition for sensitivity \cite{RT}.
Then a formulation of sensitivity was given by Guckenheimer on the study of interval maps, \cite{G}.
In 1986, Devaney \cite{De} proposed the widely accepted definition of chaos (topological transitivity,
dense periodic points and sensitivity), and emphasized the significance of sensitivity in describing dynamical systems.
Afterwards, Li-Yorke sensitivity \cite{ak}, $n$-sensitivity \cite{Xe},
and collective sensitivity \cite{WW} were successively proposed, and each of
these concepts is used to describe the complexity of dynamical systems.
For continuous self-maps of compact metric spaces, Moothathu \cite{Mo}
initiated a preliminary study of stronger forms of sensitivity formulated in terms of large subsets of
$\mathbb{N}$. Mainly he considered syndetic sensitivity and cofinite sensitivity and proved that
any syndetically transitive, non-minimal map is syndetically sensitive.
Then, Liu, Liao and Wang \cite{LLW} introduced some other versions of sensitivities, including thick sensitivity and thickly syndetical
sensitivity.
After that Wang, Yin and Yan \cite{WYY} extended some of
these results to semigroup actions.
They presented some sufficient conditions for dynamical systems of
semigroup actions to have these sensitivities.

Although the sensitivity is widely understood as the central idea in Devaney chaos, but it
is implied by transitivity and density of periodic points \cite{bbcds}.
This result was generalized in \cite{AAB}, changing density of periodic points to density
of minimal points. In fact the authors proved that if a dynamical system $(X,f)$ is topologically
transitive and has no equicontinuity point, then it is sensitive. In particular a minimal
system is either equicontinuous or sensitive.
In \cite{GW}, Glasner and Weiss (see also Akin, Auslander and Berg \cite{AAB}) established a stronger result (for compact metric systems): they
proved that if $(X,f)$ is a non-minimal $M$-system then $(X,f)$
is sensitive.
For a compact metric space $X$, we recall that an $M$-system means that the set of almost
periodic points is dense in $X$ (the Bronstein condition) and, in addition,
the system is topologically transitive.
These results were generalized for groups
in \cite{G1}.
In \cite{km}, Kontorovich and Megrelishvili
generalized these results for a wide class of topological semigroup actions including one-parameter semigroup actions on Polish spaces and $M$-systems.
Notice that the dynamical characterizations of $M$-systems have received special attention, see e.g. \cite{G1, G2, HU}.

Recently, Iglesias and Portela \cite{IP} proved that if a semigroup is almost open then it is sensitive
or there exists a residual set of equicontinuity points. As a consequence they obtained
a sufficient condition for sensitivity that generalizes that given in Kontorovich and
Megrelishvili \cite{km}.

In this literature, Auslander-Yorke dichotomy theorem \cite{au} states that for a single transitive map, either
the system is sensitive, or the set of equicontinuity  points is exactly the same as the set of transitive points.
It holds in the non-invertible case as well.
On the other hand, any weak mixing system is sensitive and there are weak mixing systems which are not $M$-systems.
Akin \cite{A} provided an example of a mixing homeomorphism on the torus with a fixed point as the unique minimal set.
Furthermore, Akin and Kolyada in \cite{ak} introduced the notion of Li-Yorke sensitivity. They proved that every weak mixing system $(X, f)$, where $X$ is a compact metric space and $f$ a continuous map of $X$ is Li-Yorke sensitive and hence it is sensitive. An example of Li-Yorke sensitive system without weak mixing factors was given in \cite{Ci} (see also \cite{C2}).
In \cite{M1}, Mel$\acute{\text{i}}$chov$\acute{\text{a}}$ proved that every minimal system with a weak mixing factor, is Li-Yorke sensitive.

The objective of this paper is to discuss the sensitivity of iterated function systems.
An iterated function system, or IFS, is simply a finite collection of continuous self-maps of a topological space $X$.
Then we can consider the semigroup generated by these transformations.
IFSs provide a method for both generating and characterizing fractal images whenever the continuous maps are contraction.
Iterated function system was firstly introduced and then popularized by Hutchinson
\cite{H} and Barnsley \cite{B}. They have a wide variety of applications in many branches of nonlinear dynamics; for instance in the
image processing theory \cite{E} and in the theory of
stochastic growth models \cite{F}.
Here, we discuss a general case of IFSs, i.e. we consider the IFSs which are generated by a family of continuous relations.
We generalize some dynamical properties to continuous relations including  transitivity, equicontinuity and sensitivity.
Each IFS generated by a finite number of continuous relations $f_i:X \to X$, $i=1, \ldots, k$, can be considered as a continuous relation $F=\bigcup_{i=1}^k f_i$.
Then, the investigation of the relationship between these notions for IFSs can be done by using the relation $F$.
The concept of transitivity could trace back to
Birkhoff \cite{Br}. After that many articles dealt with such a topic.
Transitivity is a widely accepted feature of chaos. It is often required in definitions of chaos as one of several ingredients.
In the present work, we discuss some aspects of transitivity including transitivity, weak topological exactness and topological exactness
in iterated function systems (IFSs) and investigate their relations with the notion of sensitivity.
One of our main results states that non-minimal weak topologically exact IFSs are sensitive.
As is well known, unlike ordinary dynamical systems, the minimality of an iterated function system
$\IFS(X;\mathcal{F})$ with more than one generator
is not equivalent to that of its inverse $\IFS(X;\mathcal{F}^{-1})$.
To see such systems, we refer to \cite{BG}.
This property of IFSs yields to existence of different examples of non-minimal sensitive IFSs which are not $M$-system.
Additionally, we give an example of a sensitive minimal IFS that is not equicontinuous.
\subsection{The paper is organized as follows}
In Section 2, we recall some standard definitions about relations, continuous relations and iterated function systems generated by a family of relations.
We generalize some dynamical properties including transitivity, equicontinuity and sensitivity to continuous relations. Then, we investigate their relationships.
We introduce the notion of weak topologically exact property for IFSs.
It is proved that each IFS generated by a finite family of relations on a compact metric space so that their inverses are continuous relations is backward minimal iff it is weak topologically exact.
We study the sensitivity and equicontinuity properties for IFSs in  Section 3.
We show that every non-minimal topologically exact IFS generated by a finite family of maps on a compact metric space can not be equicontinuous.
Furthermore, we prove that each non-minimal topologically exact IFS generated by a finite family of open maps on a compact metric space is sensitive.
In Section 4, we give several examples of IFSs.
The first example shows that backward minimality can not be followed by minimality
but the second example is a non-minimal weak topologically exact IFS which is backward minimal.
This example provide a non-minimal sensitive systems which are not $M$-system.
The last example is a non-minimal topologically exact system which is not an $M$-system.
\section{Dynamical Systems of Relations}
A dynamical system in the present paper  is a triple $(\Gamma,X,\varphi)$, where $\Gamma$ is a
semigroup, $X$ is a set and
$\varphi : \Gamma\times X\to X$
is a relation. Sometimes we write the dynamical system as a pair $(\Gamma,X)$.
It can be said that is the most general and common definition of dynamical systems.

We remark that a \emph{relation} $f : X \to Y$ is a subset of $X \times Y$ with $f(x) =\{ y : (x, y) \in f\}$, for $x\in X$.
Following \cite{A}, the image of $A \subset X$ under  a relation $f : X \to Y$ is given by $f(A) =\bigcup_{x \in A}f(x)$.
We assume that $X$ and $Y$ are compact metric spaces.
We define $f^{-1}= \{(y,x): (x,y) \in f\}.$ Thus, $f$ is a map when $f(x)$ is a singleton for all $x \in X$.
We say that $f$ is \emph{surjective} when $f(X) = Y$ and $f^{-1}(Y ) = X$.
For $f : X \to Y$ and $g : Y \to Z$ the \emph{composition} $g \circ f : X \to Z$ is the
projection to $X \times Z$ of $(f \times Z) \cap (X \times g) \subset X \times Y \times Z$.
Inductively, for $f : X \to X$, $f^{n+1}: = f \circ f^n$ with $f^0:=id$ the identity map.

Let us now equip $X,Y$ with two topologies and assume that they are compact. A relation $f : X \to Y$ is \emph{closed} when it is a closed subset of $X \times Y$ or,
equivalently by compactness, when $f^{-1}(B)$ is closed for all closed set $B \subseteq Y$.
That is, $f$ is \emph{upper semi-continuous}.
 A relation $f : X \to Y$ is \emph{lower semi-continuous} when $f^{-1}(B)$ is open for all open set $B \subseteq Y$.
$f$ is said to be \emph{continuous} when it is closed and lower semi-continuous.
 That is $f$ is both upper and lower semi-continuous.
By compactness the composition of closed relations is closed.
A relation  $f$ is called \emph{bi-continuous} or \emph{open} if $f$ and $f^{-1}$ are continuous. The composition
of continuous (or bi-continuous) relations is continuous (resp. bi-continuous).
The finite union of closed, continuous or bi-continuous relations is satisfies
the corresponding property.

Let us remark two popular kinds of the dynamical systems: cascades and iterated function systems.
Consider the semigroup generated by a relation $f : X \to X$ and  denote it by $\Gamma$.
Take $\varphi : \Gamma\times X\to X$ so that $\varphi(f^n,x)=\{y\in X: (x,y)\in f^n\}$, for every $n\in \mathbb{N}$.
So, a pair $(\Gamma,X)$ is a dynamical system.
If $\Gamma = \{f^n\}_{n\in\mathbb{N}}$ and $f : X \to X$ is a relation,
then the classical dynamical system $(\Gamma,X)$ is called a \emph{cascade} and we use the standard
notation: $(X, f)$.

Now, let $\mathcal{F}$ be a family of relations defined on a set $X$.
We denote by
$\mathcal{F}^+$ the semigroup generated by these relations.
Take $\varphi : \Gamma\times X\to X$ so that $\varphi(h,x)=\{y\in X: (x,y)\in h\}$, where $\Gamma=\mathcal{F}^+$.
In this case, the dynamical system $(\mathcal{F}^+,X)$ is called an \emph{iterated function system} (IFS) associated to $\mathcal{F}$.
We use the usual
notations: $\IFS(X;\mathcal{F})$ or $\IFS(\mathcal{F})$.
Roughly speaking, an iterated function system (IFS) can be thought of as a collection of relations which can
be applied successively in any order.

For the $\IFS(X;\mathcal{F})$ and $x \in X$, the \emph{forward orbit} of
$x$ is defined by
$$\mathcal{O}_\mathcal{F}^+(x)= \bigcup_{h\in \mathcal{F}^+}h(x).$$
Since $f:X\to Y$ is a relation, so is $f^{-1}:Y\to X$. Analogously, one can define
the \emph{backward orbit} of $x$ by
$$\mathcal{O}_\mathcal{F}^-(x)= \bigcup_{h\in \mathcal{F}^+}h^{-1}(x),$$
where $h^{-1}(x)=\{y:(y,x)\in h\}$.
\begin{definition}\label{def1111}\cite{BG,km} Let $\IFS(\mathcal{F})$ be an iterated function system generated by a finite or infinite
family of relations, $\mathcal{F}$, on a metric space $X$.
\begin{enumerate}[label=(\roman*),ref=\roman*]
   \item\label{it:0} $\IFS(\mathcal{F})$ is called  \emph{symmetric},
                  if for each $f \in \mathcal{F}$ it holds that $f^{-1} \in \mathcal{F}$.
   \item\label{it:1} $\IFS(\mathcal{F})$ is called \emph{ transitive}, if for any two non-empty open sets $U$ and
                  $V$ in $X$, there exists $h \in \mathcal{F}^+$ such that $h(U) \cap V \neq \emptyset$.
   \item\label{it:2} A point $x$ is called a \emph{transitive point} if $\overline{\mathcal{O}_{\mathcal{F}}^+(x)}=X$,
                  where $\overline{A}$ is the closure of a subset $A$ of $X$. Let $\text{Trans}(\mathcal{F})$
                  denote the set of all transitive points.
   \item\label{it:3} $\IFS(\mathcal{F})$ is called  \emph{forward minimal}
                  if for every $x\in X$, $\text{Trans}(\mathcal{F})=X$.
   \item\label{it:4} $\IFS(\mathcal{F})$ is called
                  \emph{backward minimal}, if $\IFS(\mathcal{F}^{-1})$ is minimal where $\mathcal{F}^{-1}=\{f^{-1}: f\in \mathcal{F}\}$, i.e. $\overline{\mathcal{O}_{\mathcal{F}}^-(x)}=X$ for every $x\in X$.
   \item\label{it:4} A subset $A$ of $X$ is called \emph{forward invariant} for $\IFS(\mathcal{F})$
                  (or $\mathcal{F}$) if $f(A)\subset A$ for every $f\in \mathcal{F}$.
\end{enumerate}
\end{definition}
\begin{remark}\label{rem1-4}The following statements can be immediately deduced from the above discussions.
 \begin{itemize}
   \item\label{re:01} A subset $A$ is forward invariant for a relation $f:X\to X$ if and only if its complement is
                      forward invariant for $f^{-1}$.
   \item\label{re:02} A continuous map is a continuous relation.
   \item\label{re:04} If $\IFS(X,\mathcal{F})$ is transitive, so is $\IFS(X,\mathcal{F}^{-1})$, because for any two nonempty open sets $U$ and $V$, $f^n(U)\cap V \neq \emptyset$ iff $U \cap f^{-n}(V)\neq \emptyset$.
   \item\label{re:05} For an iterated function system $\IFS(X,\mathcal{F})$, if $\text{Trans}(\mathcal{F})$ is dense then $\IFS(\mathcal{F})$ is transitive.
                      When each $f$ in $\mathcal{F}$ is a continuous relation the converse is true.
   \item\label{re:06} If  $\IFS(X,\mathcal{F})$ is transitive then every non-empty open and $\mathcal{F}^{-1}$-invariant set $B$ is dense.
                     When each $f$ in $\mathcal{F}$ is a continuous relation the converse is true.
   \item\label{re:07} If $\IFS(X,\mathcal{F})$ is transitive and each $f$ in $\mathcal{F}$ is a continuous relation then $\text{Trans}(\mathcal{F})$ is a dense $G_\delta$ set whenever $X$ is compact metric.
   \item\label{re:08} The minimality of $\IFS(X;\mathcal{F})$ is equivalent to the following: every forward invariant non-empty closed subset
                      for $\IFS(\mathcal{F})$ is the whole space $X$.
 \end{itemize}
\end{remark}
In what follows, we introduce the concept of weak topological exactness  which has a key role in our results and is weaker than backward minimality.
\begin{definition}Let $\mathcal{F}$ be a finite or infinite family of relations on a topological space $X$.
\begin{enumerate}
  \item  We say that $\IFS(X;\mathcal{F})$ is weak topologically exact if for every open set $U$,  there exists a finite sequence
         $(T_i)_{i}^N$ in $\mathcal{F}^+$ so that $\bigcup_{i=1}^N T_i(U)$ is dense in $X$.
  \item  \cite{BFM2018}We say that $\IFS(X;\mathcal{F})$ is  topologically exact if for every open set $U$,  there exists a sequence
         $(T_i)_{i\in\mathbb{N}}$ in $\mathcal{F}^+$ so that $\bigcup_{i} T_i(U)=X$.
\end{enumerate}
\end{definition}
 The following proposition emphasizes that backward minimality is equivalent to weak topologically exactness, in the relations theme.
\begin{proposition}\label{stransi}
Let $\mathcal{F}$ be a family of relations on a compact normal space $X$ so that for every $f\in\mathcal{F}$, $f^{-1}$ is a continuous relation.
If $\IFS(\mathcal{F})$ is backward minimal and $U$ is
a nonempty open set then there exists $T_1,\dots,T_N\in \mathcal{F}^+$ such that
$\bigcup_{n=1}^N T_{n}(U)=X$.
Conversely, if for every nonempty open set $U$ there exists $T_1,\dots,T_N$ in $\mathcal{F}^+$ such that $\bigcup_{i=1}^{N} T_{i}(U)$
is dense in $X$ then $\IFS(\mathcal{F})$ is backward minimal.
\end{proposition}
\begin{proof}
Assume that $\IFS(\mathcal{F})$ is backward minimal and $U$ is a nonempty open set.
Since $\IFS(\mathcal{F}^{-1})$ is minimal, hence for each $x \in X$ there exist
$T \in \mathcal{F}^+$ and $y \in T^{-1}(x)$ so that $y \in U$ and therefore $x \in T(U)$.
By this fact and since for every $T\in \mathcal{F}^+$, the relation $T^{-1}$ is continuous and hence $T$ is open, so
$\{T(U) : T \in \mathcal{F}^+\}$ is an open cover and therefore, by compactness of $X$, one has a finite sub-cover, i.e.
there exist $T_i \in \mathcal{F}^+$, $i=1, \ldots, \ell$, such that $X=\bigcup_{i=1}^\ell T_i(U)$.

Conversely, assume $\IFS(\mathcal{F})$ is not backward minimal. Then there exists a closed, non-empty
proper subset $A$ of $X$ so that $A$ is invariant for $\IFS(X;\mathcal{F}^{-1})$.
Let $U$ be a non-empty open set whose closure $B$ is disjoint from $A$.
Since for every $f\in\mathcal{F}$, $f^{-1}$ is a continuous relation,
the set $\bigcup_{i=1}^\ell T_i(U)$ is an open subset of $X$ and a subset of the closed set
$\bigcup_{i=1}^\ell T_i(B)$, for every finite sequence $(T_i )_{i=1}^\ell$ in $\mathcal{F}^+$.
On the other hand, $\bigcup_{i=1}^\ell T_i(B)$ is disjoint from $A$ and so $\bigcup_{i=1}^\ell T_i(U)$ is not dense.
\end{proof}
\begin{remark}
Notice that the above argument works for IFSs generated by a family of relations.
For the IFSs generated that are generated by maps, the arguments and results may be slightly different.
For example, consider the expanding map $f:\mathbb{S}^1\to \mathbb{S}^1;\ x\mapsto  2x\ (\text{mod} 1)$.
This map is topologically exact and weak topologically exact but the relation $f^{-1}:\mathbb{S}^1\to \mathbb{S}^1$
is not a map and so we can not say about the backward minimality of $(\mathbb{S}^1,f)$ as a map.
Thus, it can not be considered as a counterexample for Proposition \ref{stransi}, in some sense.
\end{remark}
\begin{proposition}
Let $\mathcal{F}$ be a family of relations on a Lindel\"{o}f space $X$ so that for every $f\in\mathcal{F}$, $f^{-1}$ is a continuous relation.
If $\IFS(\mathcal{F})$ is backward minimal then it is topologically exact.
\end{proposition}
\begin{proof}
For given a nonempty open set $U$, let $A= X \setminus \bigcup_{T \in \mathcal{F}^+}T(U)$.
By Remark \ref{rem1-4}, $A$ is a backward invariant closed subset of $X$. Also, it is clear  $A \neq X$ (since $U \neq \emptyset$).
If the iterated function system $\IFS(\mathcal{F})$ is backward minimal,
then we have that $A=\emptyset$ and so, $X=\bigcup_{T \in \mathcal{F}^+}T(U) $.
Since $X$ is a Lindel\"{o}f space, there exists a sequence $(T_i)_{i\in\mathbb{N}}$ in
$\mathcal{F}$ such that $\bigcup_{i\in\mathbb{N}} T_{i}(U)=X$ and the proof is completed.
\end{proof}

In Section \ref{exam}, we give an example of non-minimal weak topologically exact IFS generated by maps which is backward minimal.
This example is important from another point of view.
In fact, it provides a non-minimal sensitive system which is not an $M$-system, see \cite{km}.

 \section{Equicontinuity and Sensitivity for iterated function systems}
In this section, we give several sufficient conditions for sensitivity of IFSs.  To this end,
we introduce the concept of sensitivity for IFSs which is a generalized version of the existing definition
for cascades.

Let $\mathcal{F}=\{f_i: X \to X:\ i=1,\dots,k\}$ be a finite family of relations defined on a compact metric space $(X,d)$.
Symbolic dynamic is a way to represent the elements of $\mathcal{F}^+$.
Indeed, consider the product space $\Sigma^+_k = \{1,\dots, k \}^\mathbb{N}$.
For any sequence $\omega=(\omega_1\omega_2\dots\omega_n\dots)\in \Sigma_k^+$,
 take $f^{0}_{\omega}:=Id$ and
$$
          f^n_{\omega}(x)=f^{n}_{\omega_{1}\dots\omega_{n}}(x)=f_{\omega_n}\circ f_\omega^{n-1}(x); \
          \ \forall \ n\in \mathbb{N}.
$$
Obviously,   $f^n_{\omega}=f^{n}_{\omega_{1}\dots\omega_{n}}=f_{\omega_n}\circ f_\omega^{n-1}\in \mathcal{F}^+$,
for every $ n\in \mathbb{N}$.

Define the metric
\begin{equation}\label{metric}
d_{\mathcal{F}}(x_1,x_2):=\sup_{\omega,n}d_H(f_\omega^n(x_1),f_\omega^n(x_2)),
\end{equation}
where $d_H$ is  the Hausdorff distance, or Hausdorff metric. Clearly,  $d_{\mathcal{F}} \geq d_H$.
Notice that if the generators are maps, then $d_H=d$.
\begin{definition} Let $\mathcal{F}$ be a finite family of relations on a compact metric space $X$.
\begin{enumerate}
  \item   A point $x$ is an equicontinuity point when the identity map from $(X; d)$ to $(X; d_{\mathcal{F}})$ is continuous at $x$.
          We denote the set of all equicontinuity points of $\IFS(\mathcal{F})$ by $\text{Eq}(X)$.
  \item   An $\IFS(\mathcal{F})$ is equicontinuous when every
          point is an equicontinuity point and so $d_{\mathcal{F}}$ is a metric equivalent to $d$ on $X$.
  \item   For $\varepsilon > 0$, define $\text{Eq}_\varepsilon$ to be the union of
  the open subsets with $d_{\mathcal{F}}$ diameter less than $\varepsilon$.
  \item   A point $x \in X$ is sensitivity point if it is not an equicontinuity point.
  \item   An $\IFS(\mathcal{F})$ is sensitive when there exists $\varepsilon > 0$ such that every nonempty open subset has
          $d_{\mathcal{F}}$ diameter at least $\varepsilon$, i.e. $\text{Eq}_\varepsilon=\emptyset$.
\end{enumerate}
 \end{definition}
\begin{remark}\label{rem21} Notice that
\begin{itemize}
  \item an $\IFS(\mathcal{F})$ is equicontinuous  if and only if $\text{Eq}(X)=X$;
  \item $Eq(X)=\bigcap_{\varepsilon>0}Eq_\varepsilon$.
\end{itemize}
\end{remark}
Hereafter, let $\mathcal{F}=\{f_i: X \to X:\ i=1,\dots,k\}$ be a finite family of continuous maps defined on a metric space $(X,d)$,
unless otherwise stated.
Let $X=\bigcup_{i=1}^kf_i(X)$. Consider the multifunction
$$
F:X\to \mathcal{P}(X);\ x\mapsto\bigcup_{i=1}^kf_i(x),$$
where $\mathcal{P}(X)$ is the set of all nonempty subsets of $X$.
The image of a nonempty
$B\in \mathcal{P}(X)$ under $F$ is $F(B) :=\bigcup_{b\in B} F(b)$.
By this, we can consider $(X,F)$ as a dynamical system.
Obviously, $F$ is a continuous relation.
Also, $F$ is onto which means that  for every $y\in X$ there exists $x\in X$ so that $y\in F(x)$ (or $\bigcup_{x\in X} F(x)=X$).

Notice that some topological properties of $(X,F)$ and $\IFS(\mathcal{F})$ are the same. For instance:
\begin{itemize}
  \item $A$ is an invariant set for  $\IFS(\mathcal{F})$ if and only if it is an invariant set for $F$;
  \item $(X,F)$ is transitive (resp. minimal) if and only if $\IFS(\mathcal{F})$ is transitive (resp. minimal);
  \item $\text{Trans}(\mathcal{F})$=$\text{Trans}(F)$.
\end{itemize}
So, by similar arguments to  \cite{AAB}, one can obtain some of its results for a cascade system $(X,F)$ and consequently, they hold for $\IFS(X;\mathcal{F})$.
\begin{proposition}\label{openmap}
Let $\mathcal{F}$ be a finite family of  continuous maps defined on a metric space $(X,d)$ with
$X=\bigcup_{f\in\mathcal{F}}f(X)$ and $F$ be as mentioned above.
\begin{enumerate}
  \item  If $F$ is an open relation then for every $\varepsilon>0$, $\text{Eq}_\varepsilon(X) $ is open and $F$-invariant.
  \item  If $(X,F)$ is transitive with open relation $F$ so that $Eq(X)\neq \emptyset$ then $Eq(X)$ is a dense $G_\delta$ set whenever $X$ is a complete metric space.
  \item  If $(X,F)$ is transitive then $\text{Eq}(X)\subseteq\text{Trans}(F)$.
  \item  If $F$ is an open relation and $\text{Eq}(X)\neq\emptyset$ then $\text{Trans}(F^{-1})\subseteq \text{Eq}(X)$.
\end{enumerate}
\end{proposition}
\begin{proof}
\begin{enumerate}
  \item That is an immediate consequence of the definition.
  \item  Indeed, $Eq(X)=\bigcap_{k\in \mathbb{N}}Eq_{1/k}$ and  $(X,F^{-1})$ is transitive.
         Since $Eq_{1/k}(X)$ is open and  $(F^{-1})^{-1}$-invariant, the set $Eq_{1/k}$ must be open and dense.
  \item Suppose that $x\in \text{Eq}(X)$ and $y$ is an arbitrary point of $X$. For every $\varepsilon>0$, there exists $\delta<\varepsilon$
         so that $d_\mathcal{F}$-diameter $U=B_d(x,\delta)$ less than $\varepsilon$.
         Since $(X,F)$ is transitive, there exists $T\in \mathcal{F}^+$ so that $T(U)\bigcap B(y,\varepsilon/2)\neq\emptyset$.
         Therefore, $\mathcal{O}^+_\mathcal{F}(x)\bigcap B(y,\varepsilon)\neq\emptyset$.
 \item   Let  $x$ be an arbitrary point in $\text{Trans}(F^{-1})$  and $\varepsilon>0$ be given.
         Since $\text{Eq}_\varepsilon(X)$ is open and $x\in \text{Trans}(F^{-1})$, one can have $F^{-n}(x)\bigcap\text{Eq}_\varepsilon(X)\neq\emptyset$.
         Take $y\in F^{-n}(x)\bigcap\text{Eq}_\varepsilon(X)$.
         When $\text{Eq}_\varepsilon(X)$ is $F$-invariant, $x\in F^{n}(y)\bigcap\text{Eq}_\varepsilon(X)$ and so $x\in \text{Eq}_\varepsilon(X)$, for
         every $\varepsilon>0$.
\end{enumerate}
\end{proof}
Summing up what in the previous corollary, we get the next result.
\begin{corollary}\label{corcor}
If  $(X,F)$ transitive with open relation $F$ and $\text{Eq}(X)\neq\emptyset$ then
$$
\text{Trans}(F^{-1})\subseteq \text{Eq}(X)\subseteq\text{Trans}(F).
$$
\end{corollary}
\begin{proposition}\label{review}
Suppose each $f_i:X\to X$ is a continuous open surjective map, for $i=1,\dots,k$, and $(X,F)$ is not sensitive.
If $(X,F^{-1})$ is minimal, then each $f_i$ is a homeomorphism and  $(X,F)$ is equicontinuous and minimal.
\end{proposition}
\begin{proof}
Since $(X,F)$ is not sensitive, $\text{Eq}_\varepsilon(X)\neq \emptyset$, for all $\varepsilon>0$.
By Proposition  \ref{openmap}, each $\text{Eq}_\varepsilon(X)$ is $F$-invariant.
So, minimality of $(X,F^{-1})$  and Corollary \ref{corcor} imply that $\text{Eq}_\varepsilon(X)=X$, for every $\varepsilon>0$.
Hence, $(X,F)$ is equicontinuous.

To prove the minimality of $(X,F)$, we redundant one of our assumptions.
In fact, the openness of $F$ is not a necessary condition in the following results.
Minimality can be followed from the following result.
\begin{sublemma}\label{22}
If $(X,F)$ is transitive and equicontinuous then it is minimal.
\end{sublemma}
\begin{proof}
Suppose that $A$ is a nonempty closed $F$-invariant subset of $X$.
Clearly, for every $i=1,\dots,k$, $f_i(A)\subset A$.
For arbitrary $\varepsilon>0$, the set $A_\varepsilon=\{x : d_{\mathcal{F}} (x,A) < \varepsilon\}$ is open and  $F$-invariant.
By Remark \ref{rem1-4}, $F^{-1}$ is transitive.
Thus, for arbitrary $\varepsilon>0$, $A_\varepsilon$ is dense and hence $\overline{A_\varepsilon}=X$, for arbitrary $\varepsilon>0$.
It follow that $A=X$ and so $F$ is minimal.
\end{proof}
At the end, notice that each $f_i$ is equicontinuous when  $(X,F)$ is equicontinuous.
The so-called Hippopotamus Hide Theorem \cite{A0} implies that each $f_i$ is an isometry and so is a homeomorphism.
\end{proof}
\begin{corollary}
Under the assumption of Proposition \ref{review}, $(X,F^{-1})$ is equicontinuous.
\end{corollary}

\begin{corollary}
Let $\mathcal{F}$ be a finite family of maps on a space $X$.
If $\IFS(\mathcal{F})$ is weak topologically exact but not minimal then $(X,F)$ can not be equicontinuous.
\end{corollary}
The following theorem says more than the mentioned corollary and
it insures that weak topologically exact systems with non-minimality assumption are sensitive.
Also, in the following theorem, we do not need where our generators are continuous.
\begin{theorem}\label{nonmi}
Let $\mathcal{F}=\{f_1,\dots,f_k\}$ be a finite family of maps on a
compact space $X$ so that  $f^{-1}_i$ is continuous relations, $i=1,\dots,k$.
If $\IFS(\mathcal{F})$ is weak topologically exact but not minimal then $\IFS(\mathcal{F})$ is sensitive.
\end{theorem}
\begin{proof}
Let $y\in X$ with  $\overline{\mathcal{O}^+_{\mathcal{F}}(y)}\neq X$ and $z\in X \setminus \overline{\mathcal{O}^+_{\mathcal{F}}(y)}$.
Take $V=B(z,\delta)$ where $\delta=\frac{1}{4}d(z,\overline{\mathcal{O}^+_{\mathcal{F}}(y)}).$
Let $x \in X$, and let $U\subset X$ be an arbitrary neighborhood of $x$.
Since $\IFS(\mathcal{F})$ is weak topological exact, there exist  $T_1,\dots,T_\ell$ in $\mathcal{F}^+$ so that
the following holds:

$(H_1) \ \  X\subseteq \overline{\bigcup_{i=1}^\ell T_i(U)}.$\\
Furthermore, for every $i\in \{1,\dots,\ell\}$, there exist $T^{(i)}_1,\dots,T^{(i)}_{\ell_i}$ so that

 $(H_2) \ \  X\subseteq \overline{\bigcup_{j=1}^{\ell_i} T^{(i)}_j(T_i(U))}; \ \forall \ 1\leq i\leq\ell.$\\
 To verify the condition $(H_2)$ notice that $T_i^{-1}$ is a continuous relation,
 hence $T_i(U)$ is a nonempty open subset of $X$. Therefore, $(H_2)$ follows by $(H_1)$.

Take $t:= \ell+\max \Xi$, where $\Xi=\{|T^{(i)}_j|: 1\leq i\leq\ell\ \& \ 1\leq j\leq \ell_i\}$
and $|T|$ is the length of $T$ (we say that the length of $T$ is equal to $n$ and write $|T|=n$ if $T$ is the combination of $n$ elements of the generating set $\mathcal{F}$). Choose a neighborhood  $W$ around $y$ such that
$\text{diam}(f_\rho^i(W))<\delta$, for every $\rho\in \Sigma^+_k$ and for every $i=0, 1, \dots, t$.
Moreover, by the choice of $\delta$ we may assume that $d(f_\rho^i(W),V)\geq 2\delta$, for $i=0, 1, \ldots,t$ and for each $\rho\in\Sigma^+_k$.

The condition $(H_1)$ ensures that for some $1\leq s\leq \ell$,
$T_{s}(U)\bigcap W\neq \emptyset$.
On the other hand, according to $(H_2)$, there exists $1\leq j_0\leq \ell_s$ so that
$T^{(s)}_{j_0}(T_s(U))\bigcap V\neq \emptyset$.
Also, one has that $T^{(s)}_{j_0}(T_s(U))\bigcap T^{(s)}_{j_0}(W)\neq \emptyset$.
By the choice of $W$, we have $d(T^{(s)}_{j_0}(W),V)\geq 2\delta$.
Thus, $\text{diam}(T^{(s)}_{j_0}(T_s(U)))>\delta$.
Since $x$ and $U$ are arbitrary and $\delta$ does not depend on $x$, the proof is complete.
\end{proof}
Now, consider a Barnsley $\IFS(X;\mathcal{F})$ i.e. $f_i$ is a contraction of the metric $d$ on $X$,  for each $i=1,\dots,k$, where $(X,d)$ is a compact metric space.
We remark that $f$ is contractive if there exists $\lambda<1$ such that $d(f(x_1),f(x_2))\leq\lambda d(x_1,x_2)$.
Also, let $X=\bigcup_{i=1}^kf_i(X)$.
In this case, $d_\mathcal{F}=d$ and so $\IFS(\mathcal{F})$ is equicontinuous.
The induced map on the space of nonempty closed subsets
of $X$ equipped with Hausdorff metric is contraction and so by the Banach fixed point theorem it has a unique fixed point.
Since we have assumed that $F$ is surjective, the unique fixed point is $X$.
It follows that $(X,F)$ is minimal and so $(X, F^{-1})$ is transitive but it need not be minimal.
By Theorem \ref{nonmi},  $(X, F^{-1})$ is sensitive.
In this way, a natural question arises for $\IFS(\mathcal{F})$ generated by homeomorphisms where
$\text{Eq}(X)$ is a dense $G_\delta$-subset: under which assumptions $\IFS(X;\mathcal{F}^{-1})$ is sensitive?

We finalize this section with another sufficient condition for sensitivity.
Let $f$ be a map on a metric space $(X,d)$ and $x \in X$ be a fixed point of $f$.
Then the stable set $W^s(x)$ and unstable set $W^u(x)$ are defined, respectively, by
$$W^s(x)=\{y \in X: f^n(y)\to x, \ \textnormal{as} \ n \to +\infty\},$$
$$W^u(x)=\{y \in X: f^n(y)\to x, \ \textnormal{as} \ n \to -\infty\}.$$
\begin{definition}
We say that $x$ is an attracting (or repelling) fixed point of $f$ if the stable set $W^s(x)$ (or unstable set $W^u(x)$) contains an open neighborhood $B$ of $x$.
Then $B$ is called local basin of attraction (or repulsion) of $x$.
\end{definition}
\begin{theorem}\label{pro1}
Let $\mathcal{F}$ be a finite family of open continuous maps.
If $\IFS(\mathcal{F})$ is a weak topologically exact iterated function system and
the associated semigroup $\mathcal{F}^+$ has a map with a repelling
fixed point then $\IFS(\mathcal{F})$ is sensitive on $X$.
\end{theorem}
\begin{proof}
Suppose that the associated semigroup $\mathcal{F}^+$ has a map $h$ with a repelling fixed point $q$ and the local repulsion basin $B$.
Suppose that $\textnormal{IFS}(\mathcal{F})$ is not sensitive.
Then for each $n \in \mathbb{N}$, there is a non-empty open subset $U_n$ of $X$ such that the following holds:
\begin{equation}\label{e11}
diam(f_\omega^i(U_n))< \frac{1}{n}; \ \ \forall\omega \in \Sigma_k^+ \ \forall i \in \mathbb{N}.
\end{equation}
Take $n$ large enough, then
for each $y, z \in B$ with $y\neq z$, there exists $m \in \mathbb{N}$ sufficiently large so that
\begin{equation}\label{e12}
d(h^m(y),h^m(z)) \geq \frac{1}{n}.
\end{equation}
Let us fix the integer $n \in \mathbb{N}$ for which (\ref{e12}) holds. Since $\textnormal{IFS}(\mathcal{F})$ is weak topologically exact and hence by Proposition \ref{stransi} it is backward minimal, there is $T \in \mathcal{F}^+$ and $q^{\prime}\in T^{-1}(q)$ so that $q^{\prime} \in U_n$.
By continuity there is $\delta > 0$ such that $B_\delta(q) \subset B$ and there exists $B^{\prime} \subset T^{-1}(B_\delta(q))$, with $q^{\prime} \in B^{\prime}$ and $T(B^{\prime})=B_\delta(q)$, so that $B^{\prime}\subset U_n$ which implies that
$B_\delta(q) \subset T(U_n)$.\\
Let us take $y,z \in B_\delta(q)$, $y^{\prime}\in T^{-1}(y)\cap B^{\prime}$ and $z^{\prime}\in T^{-1}(z)\cap B^{\prime}$. Then $y^{\prime},z^{\prime} \in B^{\prime} \subset U_n$ and hence by (\ref{e12})
 $$d(h^m \circ T(y^{\prime}),h^m \circ T(z^{\prime})) = d(h^m(y),h^m(z)) \geq \frac{1}{n}$$
which contradicts (\ref{e11}).
\end{proof}
\section{Examples}\label{exam}
The first example belong to the folklore.
\begin{example}
Let us equipped the product space $\Sigma^+_2=\{1,2\}^\mathbb{N}$ to the metric
$d(\eta,\omega)=\inf\{2^{1-n}:\ \eta_i=\omega_i, \forall i<n\}$.
For $\theta=1,2$, let $(f_\theta(\eta))_1=\theta$ and $(f_\theta(\eta))_i=\eta_{i-1}$.
Take $\mathcal{F}=\{f_1,f_2\}$.
The inverse $F^{-1}$ on $\Sigma^+_2=\{1,2\}^\mathbb{N}$ is shift map given by $(F^{-1}(\eta))_i=(\sigma(\eta))=\eta_{i+1}$
Thus, the periodic points of $F^{-1}$ is dense and $F^{-1}$ is sensitive.
However, the $\IFS(\mathcal{F})$ is minimal and equicontinuous.
\end{example}
It is a well known fact that for ordinary dynamical
systems, the minimality of a map $f$ is equivalent to that of $f^{-1}$. Nevertheless this is
not the case for iterated function systems as Kleptsyn and Nalskii pointed at ~\cite[pg.~271]{KN04}.
 However, they omitted to include any example of forward but not backward
minimal IFS. Recently, the authors in \cite{BG}, illustrated an example of forward but not backward minimal IFS which we expose  below.
That lead to an example of backward minimal IFS which is not forward minimal. So, it is sensitive.
Since the previous examples of non-minimal sensitive systems (see, \cite{AAB,GW}) are $M$-system, this example maybe more valuable.
Indeed, our example is non-minimal sensitive system which is not an $M$-system.
Recall that $(S,X)$  is an \emph{$M$-system} if the set of almost periodic points\footnote{
A point $x$ is called almost periodic if the subsystem $\overline{\mathcal{O}^+_\mathcal{F}(x)}$ is minimal and compact.} is dense in $X$ (the
Bronstein condition) and, in addition, the system is transitive.
\begin{example}
Consider a symmetric $\IFS(\mathcal{F})$ generated by homeomorphisms of the circle.
Then, one of the following possibilities holds
~\cite{Navas,Gh01}:
\begin{enumerate}[itemsep=0.1cm]
\item there is a point $x$ so that $\text{Card}(\mathcal{O}^+_\mathcal{F}(x))<\infty$;
\item $\IFS(\mathcal{F})$ is minimal; or
\item there is a unique invariant minimal Cantor set $K$ for $\IFS(\mathcal{F})$, that is,
$$
g(K)=K \ \ \text{for all $g\in \mathcal{F}$} \quad  \text{and} \quad
K=\overline{\mathcal{O}^+_\mathcal{F}(x)} \ \
   \text{for all $x\in K$}.
$$.
\end{enumerate}
 The Cantor set $K$ in the above third conclusion is
usually called \emph{exceptional minimal set}.
By~\cite[Exer.~2.1.5]{Navas}, one can ensure that
there exists a symmetric semigroup $\mathcal{F}^+$ generated by homeomorphisms  $f_1,\dots,f_k$ of
$\mathbb{S}^1$ which admits an exceptional minimal set $K$
such that the orbit of every point of $\mathbb{S}^1\setminus K$ is dense in $\mathbb{S}^1$.
So, the
closed invariant subsets of $\mathbb{S}^1$  for $\IFS(\mathcal{F})$ are $\emptyset$,
$K$ and $\mathbb{S}^1$.

Now, consider any  homeomorphism $h$ of $\mathbb{S}^1$ such that $h(K)$
strictly contains $K$. Then the IFS generated by
$f_1^{-1},\ldots,f_n^{-1},h^{-1}$ is backward minimal but not forward minimal.
Therefore, the IFS generated by
$f_1^{-1},\ldots,f_n^{-1},h^{-1}$ is weak topological exact and sensitive. Also,
one can easily check that the IFS generated by
$f_1^{-1},\ldots,f_n^{-1},h^{-1}$  is not an $M$-system.
\end{example}
As a consequence of the previous example we get the next result.
\begin{corollary}
There exists a weak topologically exact non minimal IFS which is sensitive but it is not an $M$-system.
\end{corollary}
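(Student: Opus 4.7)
The plan is essentially to observe that the corollary is an immediate packaging of the preceding example together with the earlier corollary, so the "proof" should just point to the right pieces and verify each of the three required properties on the explicit system $\mathcal{F}=\{f,f^{-1},h_1,h_2\}$ constructed on $\mathbb{S}^1$.

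First I would take the IFS from the previous example as the witness and enumerate what has already been established there: (a) $p$ is fixed by every element of $\mathcal{F}^+\cup(\mathcal{F}^{-1})^+$, while $\overline{\mathcal{O}^+_\mathcal{F}(x)}=\overline{\mathcal{O}^-_\mathcal{F}(x)}=\mathbb{S}^1$ for all $x\neq p$; (b) the example explicitly asserts $S$-transitivity and the failure of the $M$-system property. Non-minimality is then instantaneous: $\overline{\mathcal{O}^+_\mathcal{F}(p)}=\{p\}\neq\mathbb{S}^1$, so $\IFS(\mathcal{F})$ cannot be forward minimal.

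Having these three properties ($S$-transitive, not minimal, not an $M$-system) in hand, I would invoke the Corollary immediately following the definition of $S$-transitivity — which states that every $S$-transitive non-minimal IFS is sensitive — to conclude sensitivity of this particular IFS. Combining this with the failure of the $M$-system property (already verified in the example via the fact that no $x\neq p$ is almost periodic, since $\overline{\mathcal{O}^+_\mathcal{F}(x)}=\mathbb{S}^1$ is not minimal for such $x$) delivers exactly the statement of the corollary.

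There is essentially no obstacle: the only mildly nontrivial point, already discharged inside the example, is the verification of $S$-transitivity in the absence of backward minimality — one covers an arbitrary open $U\subseteq\mathbb{S}^1$ by finitely many images $T_i(U)$ with $T_i\in\mathcal{F}^+$ using the expanding behaviour of $h_1,h_2$ on $U_1,U_2$ together with the north–south dynamics of $f$, noting that the fixed point $p$ is picked up in the \emph{closure} of the union even though it never lies in any single $T_i(U)$. Since the example already records this, the proof of the corollary reduces to citing the example and the preceding corollary, which is why the authors present it as a one-line consequence.
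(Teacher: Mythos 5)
Your proposal is correct and follows exactly the paper's intended route: the corollary is stated as an immediate consequence of the preceding example, whose system $\mathcal{F}=\{f,f^{-1},h_1,h_2\}$ supplies $S$-transitivity, non-minimality (via the global fixed point $p$), and the failure of the $M$-system property, with sensitivity then delivered by the earlier corollary on $S$-transitive non-minimal systems. Your added remark that $p$ is only captured in the \emph{closure} of $\bigcup_i T_i(U)$ correctly pinpoints why the system is $S$-transitive without being strongly transitive, which is the whole point of the example.
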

Finally, we provide another example of non-minimal topologically exact system which is not an $M$-system.
\begin{example}
Suppose $f$ is a north-south pole homeomorphism on the circle.
By this we mean that the nonwandering
set of $f$, $\Omega(f)$, consists of one fixed source, $q$, one fixed sink, $p$. 
Then 
$$\Omega(f)=\{p\}\cup\{q\}.$$
The salient feature of the north-south pole homeomorphism
is that for every $x\in \mathbb{S}^1\setminus\Omega(f)$, 
$f^n(x)\to p$ and $f^{-n}(x)\to q$ as $n\to+\infty$.
It is observed that $\mathbb{S}^1\setminus\{p,q\}$ composed of exactly two connected pieces $U_1$ and $U_2$.
Now, consider continuous maps $h$, $h_1$ and $h_2$ of $\mathbb{S}^1$ such that
\begin{enumerate}
    \item $h(x)=p$, for every $x\in \mathbb{S}^1$,
    \item $h_1(p)=p=h_2(p)$,
    \item there exist connected closed set $I$ of the circle so that $q\in I$ and $p$, $q$ are not boundary points for $I$,
    \item $\IFS(I,h_1|_I,h_2|_I)$ is minimal,
    \item there exist $a\in I\cap U_1$ and $b\in I\cap U_2$ so that $f(a),f(b)\in I$.
\end{enumerate}
Take $\mathcal{F}=\{f,f^{-1},h,h_1,h_2\}$. One can check
$$
        \overline{\mathcal{O}^+_\mathcal{F}(x)}=\mathbb{S}^1;\ \ \ \ \forall \  x\in \mathbb{S}^1\setminus \{p\}.
$$
Also, $g(p)=p$,  for every $g\in \mathcal{F}^+$.
Therefore, by Definition \ref{def1111}, the $\IFS(\mathbb{S}^1;\mathcal{F})$ is neither forward minimal nor backward minimal.
However, there are two important facts:
\begin{enumerate}
  \item  $\IFS(\mathbb{S}^1;\mathcal{F})$ is topologically exact,
  \item $\IFS(\mathbb{S}^1;\mathcal{F})$ is not an $M$-system.
\end{enumerate}
Indeed, $x$ is not almost periodic point when $x\neq p$ i.e. $\overline{\mathcal{O}^+_\mathcal{F}(x)}$ is not minimal  for $x\neq p$.
\end{example}

\section*{Acknowledgments}
We would like to thank the anonymous reviewer whose comments
and remarks improved the results and presentation of the paper.

\end{document}